\newtheorem{thm}{Theorem}
\theoremstyle{definition}
\newtheorem{defn}[thm]{Definition}
\begin{document}
\title[Dissipative realizations of contractive functions]{Does every contractive analytic function in a polydisk have a dissipative n-dimensional scattering realization?}
\author{Michael T. Jury}

\address{Department of Mathematics\\
  University of Florida\\
  Box 118105\\
  Gainesville, FL 32611-8105\\
  USA}

\email{mjury@ufl.edu}
\thanks{Research partially supported by NSF grant DMS 1101134.}

\date{\today}

\begin{abstract}
  No.
\end{abstract}
\maketitle
The title question was posed by D. Kalyuzhnyi-Verbovetskyi \cite[Problem 1.3]{unsolved}.  Let $L(\mathcal H, \mathcal K)$ denote the set of all bounded linear operators between a pair of Hilbert spaces $\mathcal H, \mathcal K$, and let $\mathbb D^n$ and $\mathbb T^n$ denote the open unit polydisk, and the unit $n$-torus, respectively.

\begin{defn}\label{def:scatter}
  An {\em dissipative nD scattering system} is a tuple
  \begin{equation}
    \alpha =(n; {\bf A}, {\bf B}, {\bf C},{\bf D}; \mathcal X, \mathcal U,\mathcal Y)
  \end{equation}
where:
\begin{itemize}
\item[i)] $n\geq 1$ is an integer;
\item[ii)] $\mathcal X, \mathcal U, \mathcal Y$ are Hilbert spaces;
\item[iii)]  ${\bf A}, {\bf B}, {\bf C},{\bf D}$ are $n$-tuples of operators (so ${\bf A}=(A_1, \dots A_n)$, etc.) with
  \begin{equation}
    A_k\in L(\mathcal X,\mathcal X), \quad B_k\in L(\mathcal U, \mathcal X), \quad C_k\in L(\mathcal X,\mathcal Y),\quad D_k \in L(\mathcal U, \mathcal Y);
  \end{equation}
\item[iv)] The operator $\zeta{\bf G}\in L(\mathcal X \oplus\mathcal U, \mathcal X\oplus \mathcal Y)$ is contractive for all $\zeta$ in the unit $n$-torus $\mathbb T^n$, where 
  \begin{equation}
    \zeta{\bf G}:= \sum_{k=1}^n \zeta_k G_k
  \end{equation}
and the $G_k$ are the $2\times 2$ block operators
\begin{equation}
  G_k= \begin{pmatrix} A_k & B_k \\
                       C_k & D_k  \end{pmatrix}
\end{equation}
\end{itemize}
\end{defn}
Given such a system, its {\em transfer function} is the $L(\mathcal U, \mathcal Y)$-valued function 
\begin{equation}
  \theta_\alpha(z) = z{\bf D} +z{\bf C}(I_{\mathcal X} -z{\bf A})^{-1} z{\bf B}.
\end{equation}
defined for all $z\in\mathbb D^n$. It is shown in \cite{KV-1} that the transfer function $\theta_\alpha$ is a {\em contractive operator function}; that is, it is analytic in the unit polydisk $\mathbb D^n$ and satisfies
\begin{equation}
  \|\theta_\alpha(z)\|_{L(\mathcal U, \mathcal Y)} \leq 1
\end{equation}
for all $z\in\mathbb D^n$.  The question is then whether every contractive operator function in $\mathbb D^n$, vanishing at the origin, is such a transfer function. The answer is known to be ``yes'' when $n=1$ or $2$, and in fact a stronger result is true: ${\bf G}$ can be chosen so that $\zeta {\bf G}$ is unitary (that is, the scattering system is {\em conservative}). It was also known that when $n=3$, there exist contractive operator functions which do not have conservative realizations; this is due to the failure of von Neumann's inequality in three variables.  (See \cite{unsolved, KV-1} for a discussion.)  In this note we show the answer is still ``no'' in the dissipative case when $n=3$, and give an explicit counterexample (in the scalar case $\mathcal U=\mathcal Y=\mathbb C$). 

We first show that any polynomial with a dissipative realization must satisfy a restricted form of von Neumann's inequality. Let $\mathcal T$ denote the set of all $n$-tuples of commuting operators ${\bf T}=(T_1, \dots T_n)$ on Hilbert space satisfying the following condition: whenever ${\bf X}=(X_1, \dots X_n)$ is an $n$-tuple of operators satisfying 
\begin{equation}
  \|\sum_{k=1}^n z_k X_k\|\leq 1
\end{equation}
for all $z=(z_1, \dots z_n)\in \mathbb D^n$, then 
\begin{equation}
  \|\sum_{k=1}^n T_k\otimes X_k\|_{L(\mathcal H \otimes \mathcal K)}\leq 1
\end{equation}
where the $T_k$ act on $\mathcal H$ and the $X_k$ act on $\mathcal K$. 

It is easy to see that the ${\bf T}$ satisfying this condition must be commuting contractions, but when $n\geq 3$ it is known that not every $n$-tuple of commuting contractions belongs to $\mathcal T$.
\begin{thm}\label{thm:restricted-vn}
  If $p$ is a polynomial which can be realized as the transfer function of a dissipative nD scattering system, then
  \begin{equation}
    \|p({\bf T})\|\leq 1
  \end{equation}
for all ${\bf T}\in \mathcal T$.
\end{thm}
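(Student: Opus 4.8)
The plan is to transplant the contractivity of the colligation operator from the torus $\mathbb T^n$ to the tensored operators $\sum_k T_k\otimes G_k$ using the defining property of $\mathcal T$, and then to recognize $p(\mathbf T)$ as a boundary value of the (single-variable) transfer function built from $\sum_k T_k\otimes G_k$, to which the contractivity theorem of \cite{KV-1} already quoted above applies.

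First I would note that $z\mapsto\sum_{k=1}^n z_k G_k$ is linear, so by the maximum modulus principle (equivalently, because $\mathbb T^n$ is the set of extreme points of the closed polydisk) the hypothesis $\|\zeta\mathbf G\|\le 1$ on $\mathbb T^n$ upgrades to $\|\sum_k z_k G_k\|\le 1$ for all $z$ in the closed polydisk, hence in particular on $\mathbb D^n$. Thus $(G_1,\dots,G_n)$ is an admissible choice of $\mathbf X$ in the definition of $\mathcal T$; if $\mathcal U\neq\mathcal Y$ one first replaces each $G_k$ by the $2\times2$ operator matrix with $G_k$ in the lower-left corner and zeros elsewhere, acting on $(\mathcal X\oplus\mathcal U)\oplus(\mathcal X\oplus\mathcal Y)$, which changes neither the pencil norms nor, as one checks, the transfer function. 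Since $\mathbf T\in\mathcal T$ we conclude
\[
\Bigl\|\sum_{k=1}^n T_k\otimes G_k\Bigr\|\le 1,\qquad\text{and}\qquad \sum_{k=1}^n T_k\otimes G_k=\begin{pmatrix}\mathcal A&\mathcal B\\ \mathcal C&\mathcal D\end{pmatrix},
\]
where $\mathcal A=\sum_k T_k\otimes A_k$ acts on $\mathcal H\otimes\mathcal X$ and $\mathcal B,\mathcal C,\mathcal D$ are defined analogously. This block operator is the colligation operator of a dissipative \emph{one}-dimensional scattering system (condition (iv) for $n=1$ just asks this operator to be a contraction), so the result quoted above shows its transfer function
\[
\psi(w)=w\mathcal D+w\mathcal C(I-w\mathcal A)^{-1}w\mathcal B
\]
is analytic and contractive on $\mathbb D$; note $\|\mathcal A\|\le1$ makes $I-w\mathcal A$ invertible there.

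It then remains to identify $\psi(w)$ with $p(w\mathbf T)$ for $w\in\mathbb D$. Since the transfer function $z\mapsto z\mathbf D+z\mathbf C(I-z\mathbf A)^{-1}z\mathbf B$ of the original colligation equals $p$ on $\mathbb D^n$, the two have identical Taylor coefficients; reading off the coefficient of $z^\gamma$ shows that $\hat p(\gamma)$ is the sum, over all orderings $(k_0,\dots,k_{m-1})$ with $m=|\gamma|$ whose index multiset is $\gamma$, of the operator words $C_{k_0}A_{k_1}\cdots A_{k_{m-2}}B_{k_{m-1}}$ (and of $D_{k}$ when $|\gamma|=1$); in particular these sums vanish when $|\gamma|>\deg p$. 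Expanding $\psi(w)=w\mathcal D+\sum_{j\ge0}w^{j+2}\mathcal C\mathcal A^j\mathcal B$, multiplying out each tensor product, and using the commutativity of $T_1,\dots,T_n$ to collapse every product $T_{k_0}\cdots T_{k_{m-1}}$ to the monomial $\mathbf T^\gamma$, one sees that the coefficient of $w^m$ in $\psi(w)$ is exactly $\sum_{|\gamma|=m}\mathbf T^\gamma\otimes\hat p(\gamma)$. Summing over $m$ gives $\psi(w)=\sum_\gamma w^{|\gamma|}\mathbf T^\gamma\otimes\hat p(\gamma)=p(w\mathbf T)$. Hence $\|p(w\mathbf T)\|\le1$ for all $w\in\mathbb D$, and letting $w\to1$ along $(0,1)$ and invoking continuity of the polynomial $p$ yields $\|p(\mathbf T)\|\le1$.

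I expect the coefficient-matching in the last step to be the main obstacle. Conceptually it merely says that evaluating a transfer function at a tuple of commuting operators is compatible with polynomial functional calculus, but the honest argument requires regrouping the a priori infinitely many length-$m$ words $T_{k_0}\cdots T_{k_{m-1}}$ into the finitely many monomials $\mathbf T^\gamma$, a step which is legitimate precisely because the $T_k$ commute. Thus the hypothesis $\mathbf T\in\mathcal T$ is used twice — once to produce the contraction $\sum_k T_k\otimes G_k$, and once (via commutativity) in this regrouping — and the polynomiality of $p$ is what keeps everything finite, the only genuinely infinite sum being the Neumann series for $(I-w\mathcal A)^{-1}$, which converges because $\|w\mathcal A\|\le|w|<1$. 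Dispatching the $\mathcal U\neq\mathcal Y$ reduction to a single Hilbert space, as indicated above, is a routine point along the way.
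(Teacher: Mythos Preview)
Your argument is correct and follows essentially the same route as the paper: use the maximum principle to make $(G_k)$ an admissible tuple for the definition of $\mathcal T$, deduce that $\sum_k T_k\otimes G_k$ is a contraction, pass to the associated linear fractional/transfer operator with a parameter $r<1$ (your $w$), identify it with $p(r\mathbf T)$ by a power-series comparison, and let $r\to 1$. The only cosmetic difference is that where you quote the $n=1$ case of the \cite{KV-1} contractivity theorem, the paper invokes the equivalent classical fact that the Schur complement $Z+Y(I-W)^{-1}X$ of a strict block contraction is contractive; you also handle the $\mathcal U\neq\mathcal Y$ reduction and spell out the coefficient matching, which the paper leaves as a remark.
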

We will say such $p$ satisfy the {\em restricted von Neumann inequality}.
\begin{proof}[Proof of Theorem~\ref{thm:restricted-vn}]
  Suppose $p$ is a polynomial vanishing at $0$ and $p=\theta_\alpha$ for some $\alpha$ as in Definition~\ref{def:scatter}; we work only in the scalar case $\mathcal U=\mathcal Y=\mathbb C$.  First note that since analytic functions in the polydisk satisfy a maximum principle relative to $\mathbb T^n$, the dissipativity condition (iv) implies
  \begin{equation}
    \|z{\bf G}\|:= \|\sum_{k=1}^n z_k G_k\|\leq 1
  \end{equation}
for all $z\in\mathbb D^n$. Then by definition, if ${\bf T}\in \mathcal T$, we have
\begin{equation}
  \|\sum_{k=1}^n T_k\otimes G_k\|\leq 1.
\end{equation}
Next, we recall the classical fact that if 
\begin{equation}
  F =
  \begin{pmatrix}
    W & X \\
    Y & Z \end{pmatrix}
\end{equation}
is a block operator and $\|F\|<1$, then the linear fractional operator
\begin{equation}
  Z+Y(I-W)^{-1}X
\end{equation}
is contractive. Now apply this to the block operator
\begin{equation}
  r{\bf T}\cdot {\bf G} = 
  \begin{pmatrix}
     r{\bf T}\cdot {\bf A} &  r{\bf T}\cdot {\bf B} \\
      r{\bf T}\cdot {\bf C} &  r{\bf T}\cdot {\bf D}  \end{pmatrix}
\end{equation}
where ${\bf T}\cdot {\bf A} :=\sum_{k=1}^n T_k\otimes A_k$, etc., and $r<1$. We conclude that if ${\bf T}\in\mathcal T$, then the linear fractional operator
\begin{equation}\label{eqn:lft-T}
  r{\bf T}\cdot {\bf D} + r{\bf T}\cdot {\bf C}(I_{\mathcal H\otimes \mathcal X} -r{\bf T}\cdot {\bf A})^{-1} r{\bf T}\cdot {\bf B}
\end{equation}
is contractive for all $r<1$. But it is straightforward to check that, since $p$ is assumed to be given by the transfer function realization
\begin{equation}\label{eqn:p-transfer}
p(z) = z{\bf D} +z{\bf C}(I_{\mathcal X} -z{\bf A})^{-1} z{\bf B}, 
\end{equation}
the expression (\ref{eqn:lft-T}) is equal to $p(r{\bf T})$ (This can be done by expanding (\ref{eqn:p-transfer}) in a power series, substituting $r{\bf T}$ for $z$, and comparing coefficients with the expansion of (\ref{eqn:lft-T}) in powers of $rT_1, \dots rT_n$.) But then $\|p(r{\bf T})\|\leq 1$ for all ${\bf T}\in\mathcal T$ and $r<1$, which suffices to establish the theorem. 
\end{proof}
It follows that any contractive polynomial which fails the restricted von Neumann inequality will fail to have a dissipative realization.  In fact, the counterexample to the classical von Neumann inequality produced by Kaijser and Varopoulos is, it turns out, also a counterexample to the restricted inequality, as we now show. The computations are taken from a closely related example considered in \cite{me}.

Let $e_1, \dots e_5$ denote the standard basis of $\mathbb C^5$.  Consider the unit vectors
\begin{align*}
v_1 &=\frac{1}{\sqrt{3}}(-e_2+e_3+e_4)\\
v_2 &=\frac{1}{\sqrt{3}}(e_2-e_3+e_4)\\
v_3 &=\frac{1}{\sqrt{3}}(e_2+e_3-e_4)\\
\end{align*}
The Kaijser-Varopoulos contractions are the commuting $5\times 5$ matrices $T_1, T_2, T_3$ defined by
$$
T_j = e_{j+1} \otimes e_1 +e_5\otimes v_j
$$
If $p$ is the polynomial
\begin{equation}\label{eqn:kv-p}
  p(z_1, z_2, z_3) = \frac15(z_1^2+z_2^2+z_3^2-2z_1z_2-2z_1z_3-2z_2z_3)
\end{equation}
then it is known that $\sup_{\zeta\in\mathbb T^3}|p(\zeta)|=1$ but 
\begin{equation}
  \|p({\bf T})\|=\frac{3\sqrt{3}}{5}>1,
\end{equation}
so $p$ fails the classical von Neumann inequality \cite{Var-74}. To show that this $p$ fails the restricted von Neumann inequality, we show that already this ${\bf T}$ belongs to $\mathcal T$; that is, if $X_1, X_2, X_3$ are operators which satisfy
\begin{equation}\label{E:minell1def}
\|z_1 X_1 +z_2 X_2 +z_3 X_3\|\leq 1
\end{equation}
for all $z\in\mathbb D^n$, then $\|\sum_{k=1}^3 T_k\otimes X_k\|\leq 1$.  To see this, we compute and find
\begin{equation}\label{E:atensort}
T_1\otimes X_1 + T_2\otimes X_2+ T_3\otimes X_3 =\begin{pmatrix}
0 & 0 & 0 & 0 & 0 \\
X_1 & 0 & 0 & 0 & 0 \\
X_2 & 0 & 0 & 0 & 0 \\
X_3 & 0 & 0 & 0 & 0 \\
0 & Y_1 & Y_2 & Y_3 & 0 \end{pmatrix}
\end{equation}
where 
\begin{align*}
Y_1 &=\frac{1}{\sqrt{3}}(-X_1+X_2+X_3)\\
Y_2 &=\frac{1}{\sqrt{3}}(X_1-X_2+X_3)\\
Y_3 &=\frac{1}{\sqrt{3}}(X_1+X_2-X_3)\\
\end{align*}
The norm of the matrix (\ref{E:atensort}) is equal to the maximum of the norms of the first column and the last row.  By (\ref{E:minell1def}), we have $\|\pm X_1\pm X_2\pm X_3\|\leq 1$ for all choices of signs, so the last row of (\ref{E:atensort}) has norm at most $1$.  To say that the first column has norm at most 1 amounts to saying that 
\begin{equation}\label{E:column-contraction}
I-\sum_{k=1}^n X_k^* X_k \geq 0.
\end{equation}
This may be seen by averaging:  by (\ref{E:minell1def}), the matrix valued function
$$
I -\sum_{i,j=1}^n \zeta_i\overline{\zeta_j} X_j^* X_i 
$$
is positive semidefinite on $\mathbb T^n$.  Integrating against normalized Lebesgue measure on $\mathbb T^n$ gives (\ref{E:column-contraction}).

There is a general principle that transfer function realizations should be equivalent to von Neumann-type inequalities. Some recent, general results in this direction may be found in \cite{me, MP}.
\bibliographystyle{plain} 
\bibliography{realization} 

\begin{thebibliography}{1}

\bibitem{unsolved}
Vincent~D. Blondel and Alexandre Megretski, editors.
\newblock {\em Unsolved problems in mathematical systems and control theory}.
\newblock Princeton University Press, Princeton, NJ, 2004.

\bibitem{me}
Michael~T. Jury.
\newblock Universal commutative operator algebras and transfer function
  realizations of polynomials.
\newblock {\em {\tt http://arxiv.org/abs/1009.6219}}.

\bibitem{KV-1}
Dmitriy~S. Kalyuzhniy.
\newblock Multiparametric dissipative linear stationary dynamical scattering
  systems: discrete case.
\newblock {\em J. Operator Theory}, 43(2):427--460, 2000.

\bibitem{MP}
Meghna Mittal and Vern~I. Paulsen.
\newblock Operator algebras of functions.
\newblock {\em J. Funct. Anal.}, 258(9):3195--3225, 2010.

\bibitem{Var-74}
N.~Th. Varopoulos.
\newblock On an inequality of von {N}eumann and an application of the metric
  theory of tensor products to operators theory.
\newblock {\em J. Functional Analysis}, 16:83--100, 1974.

\end{thebibliography}

\end{document}